\documentclass[11pt]{amsart}
\usepackage{epsfig,color}
\usepackage{blindtext}
\usepackage{graphicx}
\usepackage{enumitem}
\usepackage{url}
\usepackage{amssymb}
\usepackage{graphicx,import}
\usepackage{comment}
\usepackage{esint}
\usepackage{xypic}
\usepackage{fancyhdr}

\usepackage[margin = 1.4in] {geometry}

\usepackage{hyperref}


\numberwithin{equation}{section}

\setcounter{section}{0}
\theoremstyle{theorem}

\newtheorem{theorem}{Theorem}[section]

\theoremstyle{definition}

\newtheorem*{remark*}{Remark}

\numberwithin{equation}{section}


\title{Rigidity of Minimal Surfaces with Y-singularities and low Morse Index  in $\mathbb{R}^3$}

\author{Elham Matinpour}
\address{Department of Mathematics, Johns Hopkins University, 3400 N. Charles Street, Baltimore, MD 21218}
\email{ematinp1@jhu.edu}

\begin{document}

\maketitle

\begin{abstract}  
We investigate the geometric constraints imposed by low Morse index on minimal surfaces with $Y$-singularities, focusing on the classification of those with Morse index one. Our rigidity result establishes a partial uniqueness theorem, highlighting the $Y$-catenoid as a distinguished example among complete, two-sided minimal surfaces in $\mathbb{R}^3$ with $Y$-singularities and Morse index one.
\end{abstract}

\section{introduction}

A central problem in the field of minimal surfaces concerns the different ways we can deform a minimal surface to decrease its area to the second order. The Morse index quantifies this problem. The next interesting problem concerns identifying the minimal surfaces which exhibit the lowest Morse indices, and what these indices reveal about their geometric and analytic structure. In $\mathbb{R}^3$, significant progress has been made in classifying minimal surfaces with low Morse index. Fischer-Colbrie and Schoen \cite{fischer1980structure} famously proved that any complete, stable minimal surface characterized by a Morse index of zero must be a plane, underscoring the rigidity of stability. Beyond this trivial case, the classification of surfaces with positive but low index has remained a vibrant area of inquiry. Lopez and Ros \cite{lopez1989complete} demonstrated that the catenoid and Enneper’s surface are the only complete, oriented minimal surfaces in $\mathbb{R}^3$  with index one, while Chodosh and Maximo \cite{chodosh2023topology} established the non-existence of complete two-sided minimal surfaces with index two. For index three, notable examples of two-sided immersed minimal surfaces include the Chen-Gackstatter surface [Corollary 15, \cite{montiel1991schrodinger}], the Jorge-Meeks trinoid, the generalized Enneper surface with a multiplicity-five end, and the Richmond surface \cite{tuzhilin1992morse}, each illustrating the increasing complexity of higher-index configurations.
\medskip

While the theory of smooth minimal surfaces is well-developed, the analysis of non-smooth minimal surfaces, particularly those with singularities, presents fresh challenges and opportunities. In this work, we shift focus to minimal surfaces in $\mathbb{R}^3$ exhibiting $Y$-type singularities, a class of surfaces whose irregular geometry complicates traditional stability analysis. A natural extension of the classical question arises: Which singular minimal surfaces possess the lowest Morse indices, and how do their singularities influence their variational properties? Recent advances provide a foundation for our investigation. Wang \cite{wang2022curvature} showed that complete $Y$-singular minimal surfaces with quadratic area growth and Morse index zero are composed of flat surface components, mirroring the planar rigidity of the smooth case. Furthermore, \cite{matinpour2024morse} established that the $Y$-catenoid has Morse index one, while a family of $Y$-noids exhibits index two, suggesting a rich hierarchy of stability among singular surfaces. Building on these insights, this paper advances the classification of $Y$-singular minimal surfaces by proving a partial uniqueness of the $Y$-catenoid among the minimal $Y$-surfaces in $\mathbb{R}^3$ with Morse index one, Theorem \ref{thm: main thm}.

\subsection*{Acknowledgment}
I wish to express my gratitude to my advisor, Jacob Bernstein, for proposing this problem and encouraging me to tackle it, and also for his helpful comments and advice. 

\section{Preliminary Notations}
We define a $Y$-surface $\Sigma \subset \mathbb{R}^3$ as a rectifiable current mod 3 such that the support of $\Sigma$ is $C^{1,\alpha}_{loc}$ diffeomorphic to a plane, a half-plane, or a $Y$-cone, where $Y$-cone is a union of three half-planes joined along the boundary line forming a $Y$-configuration, that means they make an angle equal to 120-degree to each other along the common junction. Here, $C_{loc}^{1,\alpha}$ is the standard Holder space of $C^{1,\alpha}$-regular diffeomorphisms, for $\alpha <1$, defined on open neighborhoods of $\Sigma$. This means that for each point $p\in \Sigma$ we can find a neighborhood $N(p)\subset \Sigma$ such that $N(p)$ is $C^{1,\alpha}$ diffeomorphic to one of the three cones: a plane, a half-plane, a $Y$-cone. For $\Sigma$ to be a $Y$-surface, it further needs to satisfy a tangential $Y$-configuration. That is, the tangent space $T_p\Sigma$ must be one of these three cones, in Hausdorff sense. This requires that the differential $Du(p)$ of the corresponding map $u\in C_{loc}^{1,\alpha}$ be in some orthogonal groups $O(3)$ of $\mathbb{R}^3$. Hence, the tangent cone $T_p\Sigma$ to $\Sigma$ at $p$ is always one of the cones, a plane, a half-plane, a $Y$-cone. We are only concerned with minimal $Y$-surfaces, so we further assume that $\Sigma$ is minimal, that is, $\Sigma$ is a stationary current. Here, a stationary current has a (weak) vanishing mean curvature at the regular points, the points locally diffeomorphic to the plane. 
\medskip

We also need $\Sigma$ to be two-sided, in the sense that it is separating in $\mathbb{R}^3$. We say that $\Sigma$ defines a cell structure in the open set $U\subset \mathbb{R}^3$ if there exists a family of open, connected sets $\mathcal{C}(\Sigma)=\{ U^i:\ 1\leq i\leq N\}$, called the cells of $\Sigma$, such that
$$
\partial \Sigma \subset \partial U,\ \ \ \ \ U\setminus \Sigma =\bigcup_{i=1}^N
U^i$$
and for each $p\in \Sigma \cap U=\Sigma \setminus \partial \Sigma$ there is a $r>0$ such that $B_r(p)\subset U$, and for each $0<r'<r$ and $i=1,\cdots,N$, $B_{r'}(p)\cap U^i$ is connected (possibly empty), where $B_r(p)$ is the ball with radius $r$ and centered at $p$ in $\mathbb{R}^3$. In this note, the $Y$-surface $\Sigma$ is two-sided, which means that $\Sigma$ defines a cell structure in $\mathbb{R}^3$, and thus separates in $\mathbb{R}^3$.\\ 
The $Y$-surface in our main theorem is a complete surface without boundary, so the set of points locally diffeomorphic to the half-plane is empty. Here, with a complete current we mean the current $\Sigma$ without boundary that has a geodesically complete support; example of a complete support is a plane. We need $\Sigma$ to have a Euclidean area growth. Define the upper density of $\Sigma$ at $p$ to be 
$$
\bar{\Theta}(\Sigma ,p)=\limsup_{r\to 0^+} \frac{\mathcal{H}^2(\Sigma \cap B_r(p))}{\pi r^2}
$$
When the usual limit exists, we denote it by $\Theta(\Sigma ,p)$ and call it the density of $\Sigma$ at $p$. Then for $\Sigma$ to be a $Y$-surface, $\bar{\Theta}(\Sigma,p)\leq 3/2$. We assume that $\Sigma$ has a Euclidean area growth which means that $\bar{\Theta}(\Sigma ,p)$ is uniformly bounded above by a finite number, where $p$ can be infinity, and the density of $\Sigma$ at infinity is
$$
\bar{\Theta}(\Sigma ,\infty)=\limsup_{r\to \infty} \frac{\mathcal{H}^2(\Sigma \cap B_r(0))}{\pi r^2}
$$
Precisely, $\Sigma$ has a Euclidean area growth if there exists a constant $C>0$ such that for any $r>0$ and any $p\in \Sigma$, we have
$$
\mathcal{H}^2(\Sigma \cap B_r(p)) \leq Cr^2
$$
In this note, $\Sigma$ is a minimal $Y$-surface, as specified above, a stationary rectifiable current mod $3$ with a complete support, $C_{loc}^{1,\alpha}$-diffeomorphic to a plane or a $Y$-cone, two-sided, and has a Euclidean area growth. Let $\Gamma$ to be the set of the points $p\in \Sigma$ which are locally diffeomorphic to the $Y$-cone, we first prove the theorem for $\Gamma$ to be a connected closed curve. Since $\Sigma$ is a mod 3 current, we can write $\Sigma$ as a union of some integer currents $\Sigma_i$ (with multiplicity one), if $\Gamma$ is connected then $i\in \{1,2,3\}$, so that each $\Sigma_i$ satisfies the conditions specified above. We use decomposition of the support in the definition of $\Sigma$, and so we refer to these integer currents when we are computing the Morse index of $\Sigma$, this is because any bounded negative deformation on each of the integer currents will add to the index, and so we need to treat each of them separately at some point. To make it clear we define
$$
\text{reg}(\Sigma):=\{p\in \Sigma,\ T_p\Sigma \ \text{is a plane}\},    
$$
$$
 \mathcal{R}:=\{ \Sigma_i\subset \text{reg}(\Sigma),\ \Sigma_i\ \text{is a connected component of reg}(\Sigma)\}, 
$$
and,
$$
\Sigma_Y:=\{ p\in \Sigma,\ T_p\Sigma \ \text{is a $Y$-cone}\},   
$$
$$
\mathcal{Y}:=\{ \Gamma \subset \Sigma_Y,\  \Gamma \ \text{is a connected component of}\ \Sigma_Y\}
$$
We may define, $B(\Sigma):=\{ p\in \Sigma,\ T_p\Sigma \ \text{is a half-plane}\}$, in this note $B(\Sigma)=\emptyset$.
To discuss the index of $\Sigma$, we need to fix some notion of the unit normal vector field on $\Sigma$, the second fundamental form on $\Sigma$, and the mean curvature of $\Sigma$. Set $\nu=(\nu_{\Sigma_i})_{\Sigma_i\in \mathcal{R}}$ to be a choice of unit normal vector field on $\Sigma$ that is compatible with the $Y$-configuration on $\Sigma$, where $\nu_{\Sigma_i}$ is a unit normal vector field on $\Sigma_i \in \mathcal{R}$. Let $A_{\Sigma_i}$ to be the second fundamental form on $\Sigma_i$, and define $A=(A_{\Sigma_i})_{\Sigma_i\in \mathcal{R}}$ respectively. Let $H=(H_{\Sigma_i})_{\Sigma_i\in \mathcal{R}}$ denote the mean curvature of $\Sigma$ with respect to the unit normal $\nu$, where $H_{\Sigma_i}$ is the mean curvature of $\Sigma_i$.
\medskip

Let $\varphi_t$ to be a family of compactly supported $C_c^{1,\alpha}$ diffeomorphisms in $\mathbb{R}^3$ with $\varphi_0$ being the identity map. Set $\Sigma_t=\varphi_t(\Sigma)$ for $t\in (-\epsilon,\epsilon)$, then $\Sigma_t$ is a compactly supported variation of $\Sigma$. Write $V(x):=\frac{d}{dt}\vert_{t=0}\varphi_t(x)$, the variational vector field on $\Sigma$ associated with the variation $\Sigma_t$, where $V\in C_c^{1,\alpha}(\Sigma,\mathbb{R}^3)$ is the associated vector field in $\mathbb{R}^3$. We may define the normal variation on $\Sigma$ induced by $V$ as 
$$
f:=(f_{\Sigma_i})_{\Sigma_i\in \mathcal{R}},\ f_{\Sigma_i}=V\cdot \nu_{\Sigma_i},\ \Sigma_i \in \mathcal{R}
$$
We require the functions defined on $\Sigma$ to satisfy a compatible condition with the $Y$-configuration of the $Y$-surfaces. That is, $f\in C_c^{1,\alpha}$ satisfies the compatible condition if $f$ can be written as $f=V\cdot \nu$ for some $V\in C_c^{1,\alpha}(\Sigma)$. This condition makes the induced functions on $\Gamma \in \mathcal{Y}$ sum up to zero. Namely, $f\in C_c^{1,\alpha}(\Sigma)$ satisfies the compatible condition is equivalent to 
$$
\underset{i=1}{\overset{3}{\sum}}f_i\vert_{\Gamma}(x)=0,\ x\in \Gamma,\  \Gamma\in \mathcal{Y}
$$
To make it more compatible with the index form, we consider the following Sobolev space
$$
W_{com}^{1,2}(\Sigma)=\{ f=(f_{i})_{\Sigma_i\in \mathcal{R}},\ f_i\in W_c^{1,2}(\Sigma_i),\ \underset{i=1}{\overset{3}{\sum}}f_i\vert_{\Gamma}(x)=0,\ x\in \Gamma,\  \Gamma\in \mathcal{Y}\} 
$$
The second variation of area of $\Sigma_t$ associated to the normal variation $V$ at $\Sigma =\Sigma(0)$ can be  written as the quadratic form
\begin{equation}
    \nonumber
    Q : W_{com}^{1,2} (\Sigma) \rightarrow \mathbb{R} 
\end{equation}
given by 
\begin{equation} \label{eqn: 2nd variation fromula}
\begin{split}
   Q(f , f) = \frac{d^2}{dt^2} Area (\Sigma (t))\vert_{t=0} =  \underset{\Sigma_j\in \mathcal{R}}{\overset{}{\sum}}  \int_{\Sigma_j} \vert \nabla_{\Sigma_j} f_j \vert^2 - \vert A_{\Sigma_j} \vert^2 (f_j )^2  
     - \underset{\Gamma\in \mathcal{Y}}{\overset{}{\sum}}\underset{i=1}{\overset{3}{\sum}} \int_{\Gamma }    \bold{H}_{\Gamma} \cdot \tau_i (f_i \vert_{\Gamma} )^2 ,
   \end{split}
\end{equation}
where $\vert A_{\Sigma_j} \vert^2$ is the norm square of the second fundamental form of $\Sigma_j$, and $\bold{H}_{\Gamma}$ is the mean curvature vector or the geodesic curvature vector of the curve $\Gamma$ in $\mathbb{R}^3$. 
\medskip

\label{def: Morse Index} The Morse index of $\Sigma \subset \mathbb{R}^3$ is the maximal dimension of a subspace of $ W_{com}^{1,2} (\Sigma)$ on which the second variation of area functional, the index form $Q$, is negative, and the nullity is the dimension of the kernel of $Q$, \cite{wang2022curvature}, \cite{matinpour2024morse}.


\section{Uniqueness of the $Y$-surfaces with the Index One}\label{sec: main result}
\begin{theorem} \label{thm: main thm}
          Suppose that $\Sigma$ is a complete, two-sided, minimal $Y$-surface in $\mathbb{R}^3$. Suppose that $\Sigma$ has three faces, $\Sigma_1$, $\Sigma_2$, and $\Sigma_3$, and it has the compact interface $\Gamma$. If the Morse index of $\Sigma$ is equal to one then up to relabeling the faces:\\
          $\Sigma_3$ is a compact disk with the total curvature at most $2\pi$, $\Sigma_2$ is an unbounded annulus with an embedded end and total curvature at most $2\pi$, and $\Sigma_1$ is unbounded and (Dirichlet) stable.
          In addition, if $\Sigma_1$ has only one end then $\Sigma$ is a $Y$-catenoid.
        
\end{theorem}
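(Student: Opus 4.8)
The plan is to exploit the index-one hypothesis by splitting the second-variation form \eqref{eqn: 2nd variation fromula} into a ``bulk'' part governed by the Dirichlet problem on each face and an ``interface'' part coupled through the compatibility condition on $\Gamma$. First I would record the decoupling: if $f=(f_1,f_2,f_3)\in W^{1,2}_{com}(\Sigma)$ satisfies $f_i|_\Gamma=0$ for every $i$, then the interface term vanishes and $Q(f,f)=\sum_i \int_{\Sigma_i}|\nabla f_i|^2-|A_{\Sigma_i}|^2 f_i^2=\sum_i Q_i(f_i,f_i)$, so the three Dirichlet forms $Q_i$ act on mutually orthogonal subspaces with no cross terms. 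Writing $\mathrm{ind}_D(\Sigma_i)$ for the Dirichlet index of a face (the maximal dimension of a $Q_i$-negative subspace among compactly supported $f_i$ vanishing on $\Gamma$), one may concatenate negative subspaces supported on distinct faces to obtain $\sum_i \mathrm{ind}_D(\Sigma_i)\le \mathrm{ind}(\Sigma)=1$. Hence at most one face is Dirichlet unstable, and if one is, its Dirichlet index is exactly one while the other two are Dirichlet stable. Since $\mathbb{R}^3$ admits no closed minimal surface, $\Sigma$ is noncompact, so at least one face is unbounded.

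Next I would establish the structure. Euclidean area growth together with minimality and completeness gives each unbounded face finite total curvature, so each is conformally a punctured compact surface whose ends are asymptotically planar or catenoidal, hence, when embedded, flat or a single catenoid end. Combining this with the index budget above and Gauss--Bonnet, I would run a case analysis over the admissible distributions of compact versus unbounded faces and their ends. The total-curvature bounds should come from relating index one to the Gauss map: testing $Q_i$ with logarithmic cutoffs (as in Fischer--Colbrie--Schoen) together with the area-growth bound forces a Dirichlet-stable end to carry the total curvature of at most a flat or a single catenoidal end, so that $\Sigma_2$ is an annulus with one embedded end of total curvature at most $2\pi$ and $\Sigma_3$ is a compact disk of total curvature at most $2\pi$; a higher-genus or multiply connected compact face, a second Dirichlet-unstable face, or three unbounded faces would each push the index above one via an extra $Q$-negative direction. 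This isolates the asserted configuration up to relabeling: a compact disk $\Sigma_3$, an unbounded annulus $\Sigma_2$ with embedded end, and an unbounded Dirichlet-stable face $\Sigma_1$, with the stated total-curvature bounds.

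For the rigidity, suppose in addition that $\Sigma_1$ has a single end. Then $\Sigma_1$ is a Dirichlet-stable, unbounded minimal surface with boundary $\Gamma$, one end, and finite total curvature; stability together with the one-end condition should force $\Sigma_1$ into a rigid class, namely a half-catenoid or a flat piece, so that all three faces are pieces of planes and catenoids meeting along the compact curve $\Gamma$. The stationarity of $\Sigma$ as a $Y$-surface imposes the balancing condition that the three outward conormals along $\Gamma$ sum to zero and meet at $120^\circ$, hence lie in the normal plane to $\Gamma$ at each point. I would then run a L\'opez--Ros-type flux and deformation argument: the conormal balancing fixes the flux of each catenoidal piece, the $120^\circ$ matching rigidly couples the Weierstrass data of the three faces across $\Gamma$, and a Jacobi-field count (showing the only admissible Jacobi fields come from ambient isometries and scaling) leaves no free parameters beyond rigid motion and homothety, thereby identifying $\Sigma$ with the $Y$-catenoid.

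I expect the crux to be the total-curvature bookkeeping in the middle step and the concluding rigidity. Quantitatively extracting the $2\pi$ bounds from ``index one'' rather than merely from ``stability,'' and proving that the $120^\circ$ matching of the three pieces admits no deformation other than the $Y$-catenoid, will require the most care, since the interface coupling prevents treating the faces independently and the flux balancing must be shown to rigidify the configuration rather than only constrain it.
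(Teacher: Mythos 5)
Your opening observation (the Dirichlet decoupling and the budget $\sum_i \mathrm{ind}_D(\Sigma_i)\le \mathrm{ind}(\Sigma)=1$) is correct but far too weak to carry the rest of the argument, and this is where the proposal breaks down. The test functions that actually convert ``index one'' into the stated structure are precisely the ones your decoupling discards: locally constant functions $\phi=(c_1,c_2,c_3)$ with $c_1+c_2+c_3=0$, which do \emph{not} vanish on $\Gamma$. The paper shows (after a log-cutoff at scale $R$, justified by Euclidean area growth and the fact that constants lie in the weighted space $L^2_*$) that
$$
Q(\phi\varphi_R)\approx \sum_{i=1}^3 \theta_i c_i^2,\qquad \theta_i=2\pi(1-2g_i-e_i-d_i)-\tfrac12\int_{\Sigma_i}|A_i|^2,
$$
where Gauss--Bonnet is used to cancel the interface terms $\int_\Gamma \mathbf{H}_\Gamma\cdot\tau_i$ against the geodesic curvature terms. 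Restricting the diagonal form $\mathrm{diag}(\theta_1,\theta_2,\theta_3)$ to the two-dimensional plane $c_1+c_2+c_3=0$ gives a $2\times 2$ matrix with trace $\theta_1+\theta_2+2\theta_3$ and determinant $\theta_1\theta_2+\theta_3(\theta_1+\theta_2)$; index one forbids this form from having two negative eigenvalues, and that numerical constraint (negative trace with positive determinant is impossible) is exactly what yields $\theta_3\in[\pi,2\pi)$ and $\theta_2\in[-4\pi,-2\pi)$, later refined, hence the compact disk, the unbounded annulus, and the $2\pi$ total-curvature bounds. Your substitute --- ``testing $Q_i$ with logarithmic cutoffs forces a Dirichlet-stable end to carry total curvature at most $2\pi$'' --- cannot work: Dirichlet stability of a single face does not bound its total curvature by $2\pi$, and note in particular that the $2\pi$ bound on the \emph{compact} face $\Sigma_3$ cannot come from any stability statement about $\Sigma_3$ alone; it comes from the coupled inequality $\pi\le\theta_3<2\pi$. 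You correctly flag this bookkeeping as the crux, but the idea needed to carry it out --- the two-parameter family of compatible constant test functions together with the trace/determinant analysis --- is absent from your plan.

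The concluding rigidity step is also underdeveloped, though more easily repaired: rather than a L\'opez--Ros-type flux and Jacobi-field argument (which you only sketch, and whose key claim that ``stability plus one end forces $\Sigma_1$ into a rigid class'' is unproven), the paper concludes by citing Bernstein--Maggi [Theorem 1.4, \cite{bernstein2021symmetry}], which applies once the configuration of a compact disk, an annulus, and a one-ended third face meeting along $\Gamma$ has been established, and directly identifies $\Sigma$ as the $Y$-catenoid. If you replace your middle paragraph by the constant-test-function computation above and your final paragraph by that citation, the index-as-budget architecture of your first paragraph can be retained; as written, however, the proposal has a genuine gap at its central step.
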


\begin{proof}
We assume that $\Sigma$ has a Euclidean volume growth, this is a natural assumption, in particular, on surfaces $\Sigma$ with finite total curvature in $\mathbb{R}^3$,  [Theorem 2. \cite{fischer1985complete}]. The domain of the index form is slightly larger than $W_c^{1,2}(\Sigma)$-space, in fact, it contains the space of $L^2(\Sigma)$ functions because they support a cut-off function to become compactly supported functions. However, the set of allowable functions may be slightly bigger than $L^2(\Sigma)$-space. Chodosh and Maximo, \cite{chodosh2023topology}, introduced a weighted $L^2$-space, denoted by $L_*^2$, designed to maximize the number of $L^2$-orthogonal functions in the domain of the index form, such that they remain sufficiently controlled to establish a weighted analogue of Fischer-Colbrie’s index result \cite{fischer1985complete} and support a cut-off function argument (see \cite{chodosh2023topology} for details). This space $L_*^2(\Sigma)$ is defined as the completion of smooth, compactly supported functions with respect to the norm,
$$
\Vert f\Vert_{L_*^2(\Sigma)}^{2}:=\int_{\Sigma}f^2(1+\vert x\vert^2)^{-1}(\log(2+\vert x\vert))^{-2},
$$
where $\vert x\vert$ is the Euclidean distance. \\
Consider the minimal $Y$-surface $\Sigma =(\Sigma_j\in \mathcal{R}; \Gamma \in \mathcal{Y})$, we first assume that $\Gamma$ is a single closed curve. Then there are three faces $\Sigma_j \in\mathcal{R}$ associated with $\Gamma$, and we can denote $\Sigma$ briefly by $\Sigma =(\underset{j=1}{\overset{3}{\cup }} \Sigma_j; \Gamma )$. Then it follows from the fact that the only singular points are the $Y$-points that $\Gamma$ is embedded, and thus a simple closed curve. The finite index implies that the total curvature of $\Sigma$, and thus of each $\Sigma_j$, is finite. Then each $\Sigma_j$ has a Euclidean area growth, and then we can check that a constant function, say $1$, is in the $L^2_*(\Sigma)$-space. That can be checked by an estimation:
$$
\int_1^{\infty}\frac{1}{1+r^2}\frac{1}{(\log(2+ r))^2}rdr <\infty .
$$
Therefore, we can plug a constant function into the index form after applying some cut-off functions. Let $\phi=(c_1,c_2,c_3)$ be a function on $\Sigma$ that satisfies the compatibility condition $c_1+c_2+c_3=0$, and $c_i$s are some real constants. Let $\xi :\mathbb{R}\to \mathbb{R}$ be a smooth function satisfying $\xi(s)=0$ for $s\leq 0$ and $\xi (s)=1$ for $s\geq 1$. Then there exists a constant $C>0$ such that $\vert \xi'\vert,\vert \xi''\vert\leq C$. For any $R>1$, we define $\varphi_R:\Sigma\to \mathbb{R}$ to be:
$$
\varphi_R(x)=\xi\circ \psi_R(x),
$$
where $\psi_R(x)=2-\frac{\log\vert x\vert}{\log R}$ is the standard log-cutoff function. Then one can check that
$$
\vert \nabla \varphi_R\vert \leq \frac{C}{\vert x\vert \log R},\ \ \ \vert \Delta \varphi_R \vert \leq \frac{C}{\vert x\vert^2 \log^2R}+\frac{C}{\vert x\vert^{2+2/k}\log R},
$$
see \cite{chodosh2023topology} for more details. 
Let $f=\phi \varphi_R$, and then compute
\begin{equation}
\nonumber
\begin{split}
Q(f)&=\underset{i=1}{\overset{3}{\sum}}(\int_{\Sigma_i}\vert \nabla f_i\vert^2 -\vert A_i\vert^2f_i^2 -\int_{\Gamma} \bold{H}_{\Gamma}\cdot \tau_i f_i^2)\\
&=\underset{i=1}{\overset{3}{\sum}}(\int_{\Sigma_i\cap B_R(0)} -\vert A_i\vert^2c_i^2 +\int_{B_{2R}(0)\setminus B_R(0)} c_i^2\vert \nabla \varphi_R \vert^2 -\vert A_i\vert^2c_i^2\varphi_R^2 -\int_{\Gamma} \bold{H}_{\Gamma}\cdot \tau_i c_i^2)\\
&=\underset{i=1}{\overset{3}{\sum}}(\int_{\Sigma_i\cap B_R(0)} -\vert A_i\vert^2c_i^2  -\int_{\Gamma} \bold{H}_{\Gamma}\cdot \tau_i c_i^2\\
&+\int_{B_{2R}(0)\setminus B_R(0)} c_i^2\vert \nabla \varphi_R \vert^2 -\vert A_i\vert^2c_i^2\varphi_R^2)
\end{split}
\end{equation}
For $R>0$ sufficiently large $\int_{B_{2R}(0)\setminus B_R(0)}\vert A_i\vert^2 \approx 0$, and we can estimate
$$
\int_{B_{2R}(0)\setminus B_R(0)} \vert \nabla \varphi_R \vert^2 -\vert A_i\vert^2 \varphi_R^2 \approx \int_{B_{2R}(0)\setminus B_R(0)} \vert \nabla \varphi_R \vert^2 \approx 0
$$
Hence,
$$
Q(f)\approx \underset{i=1}{\overset{3}{\sum}}(\int_{\Sigma_i\cap B_R(0)} -\vert A_i\vert^2c_i^2  -\int_{\Gamma} \bold{H}_{\Gamma}\cdot \tau_i c_i^2)
$$
Suppose that each face $\Sigma_i$ has a finite number of ends $e_i$ with finite multiplicities summing up to $d_i$, and let $g_i$ to be the number of genus of $\Sigma_i$. By the Gauss-Bonnet theorem,
$\int_{\Sigma_i}K_idA+\int_{\Gamma}k_ids + 2\pi d_i=2\pi \chi (\Sigma_i)$, where $K_i$ is the Gaussian curvature of $\Sigma_i$ and $k_i$ is the curvature of $\Gamma$ relative to $\Sigma_i$. Recall that,
$\chi(\Sigma_i )=2-2g_i-1-e_i=1-2g_i-e_i$ of $\Sigma_i$, and we have $\vert A_i\vert^2 =-2K_i$ on minimal surfaces, then compute
\begin{equation}
\nonumber
\begin{split}
Q(f)&\approx \underset{i=1}{\overset{3}{\sum}}(\int_{\Sigma_i\cap B_R(0)} -\vert A_i\vert^2c_i^2  -\int_{\Gamma} \bold{H}_{\Gamma}\cdot \tau_i c_i^2)\\
&\approx \underset{i=1}{\overset{3}{\sum}}(c_i^2 (2\pi (1-2g_i-e_i-d_i))-\int_{\Sigma_i\cap B_R(0)} \frac{1}{2}\vert A_i\vert^2 +c_i^2\int_{\Gamma} -k_i -c_i^2\int_{\Gamma} -k_i )\\
&\approx \underset{i=1}{\overset{3}{\sum}}(c_i^2 (2\pi (1-2g_i-e_i-d_i))-\int_{\Sigma_i\cap B_R(0)} \frac{1}{2}\vert A_i\vert^2 )
\end{split}
\end{equation}
where $k_i=-\bold{H}_{\Gamma}\cdot \tau_i$. 
Hence,
\begin{equation}\label{eqn: Q(f), f=(c1,c2,c3)}
Q(f)\approx \underset{i=1}{\overset{3}{\sum}} c_i^2  (2\pi (1-2g_i-e_i-d_i))-\int_{\Sigma_i\cap B_R(0)}\frac{1}{2}\vert A_i\vert^2 
\end{equation}
Write $\alpha_i=2\pi(1-2g_i-e_i-d_i)$ and $\beta_i=-\frac{1}{2}\int_{\Sigma_i}\vert A_i\vert^2$. Then the sign of the index form would be determined by the sign of $\alpha_i+\beta_i$s. Clearly, if $\alpha_i+\beta_i<0$ for all three faces then it follows from the compatibility condition of $Y$-surface that the index is at least two. Therefore, we need to have $\alpha_i+\beta_i\geq 0$ for some $i$. \\
Suppose that $\alpha_i+\beta_i=0$ on $\Sigma_i$. It follows from $\beta_i \leq 0$ that $\alpha_i\geq 0$, and thus $g_i=0$ and $e_i=0$. Then $\alpha_i=2\pi$ and $\Sigma_i$ is a bounded disk.  \\
Now suppose that  $\alpha_i+\beta_i>0$ on $\Sigma_i$, then from $\beta_i\leq 0$ we have $\alpha_i >0$, and thus $g_i=0$ and $e_i=0$. Thus $\Sigma_i$ is a bounded disk.\\

Set $\theta_i:=\alpha_i+\beta_i$, and order $\theta_1 \leq \theta_2 \leq \theta_3$. Noting that $\theta_i<0$ is possible for at most two faces due to the index restriction, conclude that $\theta_3 \geq 0$, then summarize the possible cases as follows.\\
 Suppose that $\theta_3\geq0$, then $\Sigma_3$ is a bounded disk, say a compact disk. Note that, it is not possible for all three faces to be compact because then by the convex hull property for compact minimal surfaces, three compact faces are enclosed by the convex hull of $\Gamma$, and so $\Sigma$ fails to keep the $Y$-configuration, see \cite{colding2011course}, \cite{osserman1971convex}, for a reference on the convex hull property. Note that the structure of two compact faces and one non-flat unbounded face is not possible by the strong half-space theorem. Therefore, there must be a compact face, and the two other faces are unbounded. We may check that it is not possible for an unbounded face to be a punctured plane; otherwise, the compact face must be flat too. This is because $\Gamma$ is then a planar curve, and the minimal compact face then makes a constant angle to the plane along $\Gamma$, and thus must be a flat disk, \cite{nitsche1989lectures}, \cite{nitsche1965new}, which is impossible due to the $Y$-configuration of $\Sigma$. \\
We conclude that $\theta_1, \theta_2<0$ and $\theta_3\geq0$ so that the possible structures are one compact and the two other non-flat unbounded faces. 
\medskip

Note that since $\theta_3\leq 2\pi$, if both $\theta_2$ and $\theta_1$ are large negative numbers, then the quadratic index form has two negative directions, which makes the index at least two. We may represent the index form by a $3\times 3$ matrix and then write it diagonally with $\theta_i$s on the diagonal. Then employing the compatible condition, $c_1+c_2+c_3=0$, we get a matrix $2\times 2$ representing the index form with its trace equal to $\theta_1+\theta_2+2\theta_3$ and its determinant equal to $\theta_1\theta_2+\theta_3(\theta_1+\theta_2)$. It can be checked that if the trace is negative while the determinant is positive, then there are two negative directions for the index form. Now suppose that $\theta_2 \leq -4\pi$, then by $\theta_1\leq \theta_2$ we have $\theta_1 \leq -4\pi$. Then since $0\leq \theta_3\leq 2\pi$ we find that the trace of the matrix associated with the index form is negative while its determinant is positive, which implies that the index of $\Sigma$ is at least two, a contradiction. Hence, we may require $-4\pi \leq \theta_2\leq -2\pi$ which implies that $\Sigma_2$ is an annulus. Note that $\theta_2=-2\pi$ implies that $\Sigma_2$ is flat, which is not possible, so we have $-4\pi \leq \theta_2< -2\pi$. Hence, $\Sigma$ consists of a compact disk $\Sigma_3$, an unbounded annulus $\Sigma_2$, and a third unbounded face $\Sigma_1$. 
\medskip

Suppose that $\theta_3 =2\pi$ then $\Sigma_3$ is a flat compact disk and thus $\Sigma_2$ and $\Sigma_1$ meet the plane along $\Gamma$ at an equal and constant angle; hence, by the Hopf lemma they are symmetric. Hence, $\Sigma_1$ is also an annulus. 

Now suppose that $\theta_3< 2\pi$. A simple computation shows that if $\theta_3 <\pi$, that is equivalent to $-\int_{\Sigma_3}\vert A_3\vert^2 <-2\pi$, then\\
$$
\theta_1+\theta_2+2\theta_3<0, \ \ \ \theta_1\theta_2+\theta_3(\theta_1+\theta_2)>0
$$
which implies that the index is at least two. 
Therefore, we must have $\pi \leq \theta_3 < 2\pi$, and then $-4\pi \leq \theta_2 < -2\pi$. If $-4\pi\leq \theta_1<-2\pi$ then $\Sigma_1$ is also an annulus. If $\Sigma_1$ is not an annulus, then we may assume that $\Sigma_1$ has more than one end, or it has some genus. This results in $\theta_1 \leq-6\pi$.
Then we can check $-3\pi \leq \theta_2 <-2\pi$. Suppose that $\theta_1= -6\pi$ then $\theta_1\theta_2+\theta_3(\theta_1+\theta_2)= -6\pi \theta_2 +\theta_3 (-6\pi +\theta_2)>-6\pi \theta_2 + 2\pi (-6\pi +\theta_2)=-4\pi \theta_2-12\pi^2$, this is positive if $\theta_2< -3\pi$, and this makes the index form negative on a space of dimension at least two, a contradiction. Therefore, if $\Sigma_1$ is not an annulus, then $\Sigma_2$ is an annulus with the total curvature $\int_{\Sigma_2}\vert A_2\vert^2 \leq 2\pi$. If $\Sigma_1$ is an annulus or if $\Sigma_1$ has only one end, then the result of Bernstein and Maggi [Theorem 1.4, \cite{bernstein2021symmetry}] applies and $\Sigma$ must then be a $Y$-catenoid. Otherwise, $\Sigma_3$ is a disk with the total curvature at most $2\pi$, this would be directly checked by $\pi\leq \theta_3<2\pi$, $\Sigma_2$ is an annulus with the total curvature at most $2\pi$, and $\Sigma_1$ unbounded and stable under the Dirichlet condition.
\end{proof}



\begin{thebibliography}{99}
\bibitem[Ber21]{bernstein2021symmetry}
Jacob Bernstein and Francesco Maggi, \emph{Symmetry and rigidity of minimal surfaces with plateau-like singularities}, Archive for Rational Mechanics and Analysis \textbf{239} (2021), no.~2, 1177--1210.

\bibitem[Cho23]{chodosh2023topology}
Otis Chodosh and Davi Maximo, \emph{On the topology and index of minimal surfaces II}, Journal of Differential Geometry \textbf{123} (2023), no.~3, 431--459.

\bibitem[Col11]{colding2011course}
Tobias H. Colding and William P. Minicozzi, \emph{A course in minimal surfaces}, vol. 121, American Mathematical Soc., 2011.

\bibitem[Fis80]{fischer1980structure}
Doris Fischer-Colbrie and Richard Schoen, \emph{The structure of complete stable minimal surfaces in 3-manifolds of non-negative scalar curvature}, Communications on Pure and Applied Mathematics \textbf{33} (1980), no.~2, 199--211.
\bibitem[Fis85]{fischer1985complete}
Doris Fischer-Colbrie, \emph{On complete minimal surfaces with finite Morse index in three manifolds}, Inventiones mathematicae \textbf{82} (1985), no.~1, 121--132.

\bibitem[Lóp89]{lopez1989complete}
Francisco J. López and Antonio Ros, \emph{Complete minimal surfaces with index one and stable constant mean curvature surfaces}, Comment. Math. Helv. \textbf{64} (1989), no.~1, 34--43.

\bibitem[Mat24]{matinpour2024morse}
Elham Matinpour, \emph{Morse index of y-singular minimal surfaces}, arXiv preprint arXiv:2410.15534 (2024).

\bibitem[Mon91]{montiel1991schrodinger}
Sebastián Montiel and Antonio Ros, \emph{Schrödinger operators associated to a holomorphic map}, Global differential geometry and global analysis, Springer, 147--174, 1991.

\bibitem[Nit65]{nitsche1965new}
Johannes C. C. Nitsche, \emph{On new results in the theory of minimal surfaces}, Bulletin of the American Mathematical Society \textbf{71} (1965), no.~2, 195--270.

\bibitem[Nit89]{nitsche1989lectures}
Johannes C. C. Nitsche, \emph{Lectures on minimal surfaces: vol. 1}, Cambridge University Press, 1989.

\bibitem[Oss71]{osserman1971convex}
Robert Osserman, \emph{The convex hull property of immersed manifolds}, Journal of Differential Geometry \textbf{6} (1971), no.~2, 267--270.

\bibitem[Tuz92]{tuzhilin1992morse}
Alexey A. Tuzhilin, \emph{Morse-type indices of two-dimensional minimal surfaces in $R^3$ and $H^3$}, Mathematics of the USSR-Izvestiya \textbf{38} (1992), no.~3, 575.

\bibitem[Wan22a]{wang2022curvature}
Gaoming Wang, \emph{Curvature estimates for stable minimal surfaces with a common free boundary}, Calculus of Variations and Partial Differential Equations \textbf{61} (2022), no.~4, 1--26.

\end{thebibliography}
\end{document}